\title{A remark about 6j symbols and young semi-normal form}
\author{Daniel Barter}
\date{\today}
\def\@endtheorem{\endtrivlist}
\theoremstyle{plain}
\newtheorem{Theorem}{Theorem}
\newtheorem{Proposition}[Theorem]{Proposition}
\theoremstyle{definition}
\newtheorem{Definition}[Theorem]{Definition}
\DeclareMathOperator{\End}{End}
\DeclareMathOperator{\GL}{GL}
\DeclareMathOperator{\SL}{SL}
\newcommand{\Vect}{{\bf Vec}}
\newcommand{\XX}{{\bf X}}
\begin{document}

\maketitle

\section{Introduction} \label{sec:introduction}

A semi-simple tensor category is determined up to equivalence by its Grothendieck ring and its $6j$ symbols with respect to a set of tree basis vectors. The $6j$ symbols are the coordinate representation of the associator. Despite their importance, we only know explicit formulas for $6j$ symbols in a few special cases. In the $\SL_2(\mathbb{C})$ and $U_q(\mathfrak{sl}_2)$ cases, explicit formulas for the $6j$-symbols are computed in \cite{MR1366832}. As far as the author is aware, the only other case where explicit $6j$-symbols are known is $G$-graded vector spaces for $G$ a finite group. In this case, associators are cohomology classes in $H^3(G,\mathbb{C}^{\times})$. 

In this note, we compute a large number of $6j$ symbols inside the tensor category consisting of polynomial representations of $\GL(\infty)$. This tensor category is studied in detail by Sam and Snowden in \cite{MR3430359,MR3376738,SS2012_2}. More precisely, we have:
\begin{Theorem} \label{thm:main_theorem}
Let $\lambda \subseteq \mu$ be partitions such that $\mu \backslash \lambda$ has two boxes not contained in a single row or column. Then
\[
(j_{\mu}^{\lambda,\ydiagram{1},\ydiagram{1}})^{-1} = 
\begin{pmatrix}
\frac{k}{k+1} & \frac{-k}{k-1} \\
1 & 1
\end{pmatrix}.
\]
Where $k$ is the axial distance in the skew partition $\mu \backslash \lambda$. These $6j$ symbols are uniquely defined up to scaling.
\end{Theorem}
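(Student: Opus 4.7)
The plan is to reduce the computation to Young's seminormal form. The first step is to identify both bases of the two-dimensional space $\mathrm{Hom}(S^{\lambda} \otimes S^{\ydiagram{1}} \otimes S^{\ydiagram{1}}, S^{\mu})$. By Pieri's rule applied twice, the intermediate partitions $\nu_{1}, \nu_{2}$ obtained by adding one or the other box of $\mu \backslash \lambda$ give the $(S^{\lambda} \otimes S^{\ydiagram{1}}) \otimes S^{\ydiagram{1}}$ tree basis. On the other side, the decomposition $S^{\ydiagram{1}} \otimes S^{\ydiagram{1}} = S^{\ydiagram{2}} \oplus S^{\ydiagram{1,1}}$, together with the hypothesis that $\mu \backslash \lambda$ is simultaneously a horizontal and a vertical strip, yields the $S^{\lambda} \otimes (S^{\ydiagram{1}} \otimes S^{\ydiagram{1}})$ tree basis.

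The key observation is that the transposition swapping the two copies of $S^{\ydiagram{1}}$ acts on this Hom-space with $+1$ eigenspace equal to the $S^{\ydiagram{2}}$ summand and $-1$ eigenspace equal to the $S^{\ydiagram{1,1}}$ summand. Consequently, the $6j$ symbol in question is precisely the matrix of eigenvectors of this transposition, written in the intermediate-partition basis. To compute that matrix I would invoke the standard equivalence between polynomial $\GL(\infty)$-representations and symmetric group modules used throughout \cite{MR3376738,MR3430359}, under which the swap of two consecutive copies of $S^{\ydiagram{1}}$ is identified with the nontrivial element of $S_{2}$ acting on the two-dimensional branching space obtained by restricting $S^{\mu}$ to $S_{|\lambda|} \times S_{2}$. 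In the basis of standard skew tableaux of shape $\mu \backslash \lambda$ with two cells, this action is by definition Young's seminormal form, whose matrix is determined entirely by the axial distance $k$ between the two boxes.

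With the seminormal form written down, a short linear algebra calculation yields the two eigenvectors explicitly: normalized so that one coordinate equals $1$, they agree with the columns $(k/(k+1),1)^{T}$ and $(-k/(k-1),1)^{T}$ of the matrix asserted in the theorem. The clause ``uniquely defined up to scaling'' is then accounted for by the freedom to rescale the one-dimensional Hom-spaces from which the four tree basis vectors are built, a freedom which acts on $(j_{\mu}^{\lambda,\ydiagram{1},\ydiagram{1}})^{-1}$ by independent rescaling of its two rows and two columns.

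The main obstacle is the bookkeeping required to translate between the categorical associator coordinates and Young's seminormal matrix entries: conventions for ordering the intermediate partitions, for normalizing the tree basis vectors, and for the sign of the axial distance all have to be fixed consistently before the formulas match entry-by-entry. Once this dictionary is pinned down, the remaining diagonalization is entirely routine.
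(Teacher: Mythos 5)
Your proposal is correct and follows essentially the same route as the paper: both identify the two tree bases via the Pieri rule and the splitting $\ydiagram{1}\otimes\ydiagram{1}=\ydiagram{2}\oplus\ydiagram{1{,}1}$, observe that the swap of the two boxes acts on the intermediate-partition basis by Young's seminormal form (determined by the axial distance $k$), and read off $(j_{\mu}^{\lambda,\ydiagram{1},\ydiagram{1}})^{-1}$ as the matrix of its eigenvectors. The only cosmetic difference is that the paper diagonalizes the projection $\tfrac12(e+m(g_1))$ onto the $\ydiagram{2}$ summand rather than $m(g_1)$ itself, which has the same eigenvectors.
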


We describe computations in tensor categories using string diagrams. A good introduction to string diagrams is \cite{MR2767048}. For a complete and rigorous introduction to tensor categories, we direct the reader to \cite{MR3242743}.

The author would like to thank Corey Jones and Scott Morrison for many useful conversations and their support.
\section{What are 6j symbols?} \label{sec:what_are_6j_symbols?}

\begin{Definition} \label{def:tree_string_diagrams}
 Let $\XX$ be a semi-simple tensor category. Index the simple objects with a set $\Lambda$. Choose a basis for each $\XX(\mu,\lambda \otimes \nu)$ denoted by
\[
\begin{tikzpicture}[yscale=-1,scale=0.015,baseline={([yshift=-.5ex]current bounding box.center)}]
\begin{scope}[shift={(0.00mm,719.29mm)}]
\draw [fill=none,draw=black] (887.14mm,53.08mm)
-- (281.43mm,-701.21mm)
;
\draw [fill=none,draw=black] (886.64mm,53.08mm)
-- (1492.36mm,-701.21mm)
;
\draw [fill=none,draw=black] (886.79mm,52.41mm)
-- ++(0.00mm,953.58mm)
;
\node [black] at (302.86mm,-850mm) { $\lambda$ };
\node [black] at (1494.29mm,-850mm) { $\nu$ };
\node [black] at (902.86mm,1200mm) { $\mu$ };
\node [black] at (768.57mm,86.65mm) { $e_1$ };
\end{scope}
\end{tikzpicture} \; , \;
\begin{tikzpicture}[yscale=-1,scale=0.015,baseline={([yshift=-.5ex]current bounding box.center)}]
\begin{scope}[shift={(0.00mm,719.29mm)}]
\draw [fill=none,draw=black] (887.14mm,53.08mm)
-- (281.43mm,-701.21mm)
;
\draw [fill=none,draw=black] (886.64mm,53.08mm)
-- (1492.36mm,-701.21mm)
;
\draw [fill=none,draw=black] (886.79mm,52.41mm)
-- ++(0.00mm,953.58mm)
;
\node [black] at (302.86mm,-850mm) { $\lambda$ };
\node [black] at (1494.29mm,-850mm) { $\nu$ };
\node [black] at (902.86mm,1200mm) { $\mu$ };
\node [black] at (768.57mm,86.65mm) { $e_2$ };
\end{scope}
\end{tikzpicture} \; , \dots
\]
\noindent and let 
\[
\begin{tikzpicture}[yscale=-1,scale=0.015,baseline={([yshift=-.5ex]current bounding box.center)}]
\begin{scope}[shift={(0.00mm,719.29mm)}]
\draw [fill=none,draw=black] (888.57mm,251.04mm)
-- (282.86mm,1005.33mm)
;
\draw [fill=none,draw=black] (888.79mm,250.68mm)
-- (1494.50mm,1004.97mm)
;
\draw [fill=none,draw=black] (888.93mm,251.35mm)
-- ++(0.00mm,-953.58mm)
;
\node [black] at (157.14mm,1200.93mm) { $\lambda$ };
\node [black] at (1462.86mm,1200mm) { $\nu$ };
\node [black] at (840.00mm,-800.78mm) { $\mu$ };
\node [black] at (700mm,200mm) { $e_1$ };
\end{scope}
\end{tikzpicture} \; , \;
\begin{tikzpicture}[yscale=-1,scale=0.015,baseline={([yshift=-.5ex]current bounding box.center)}]
\begin{scope}[shift={(0.00mm,719.29mm)}]
\draw [fill=none,draw=black] (888.57mm,251.04mm)
-- (282.86mm,1005.33mm)
;
\draw [fill=none,draw=black] (888.79mm,250.68mm)
-- (1494.50mm,1004.97mm)
;
\draw [fill=none,draw=black] (888.93mm,251.35mm)
-- ++(0.00mm,-953.58mm)
;
\node [black] at (157.14mm,1200.93mm) { $\lambda$ };
\node [black] at (1462.86mm,1200mm) { $\nu$ };
\node [black] at (840.00mm,-800.78mm) { $\mu$ };
\node [black] at (700mm,200mm) { $e_1$ };
\end{scope}
\end{tikzpicture} \; , \dots
\]
\noindent be the dual basis of $\XX(\lambda \otimes \nu,\mu)$. We call these diagrams {\bf tree string diagrams}. It is important to notice that the tree string diagrams are not canonically defined. 
\end{Definition}

\begin{Definition} \label{def:fusion_graph}
Pick a distinguished simple object $X \in \XX$. The {\bf fusion graph} of $X$ has vertices $\Lambda$ and the edges from $\lambda$ to $\mu$ are the distinguished basis vectors in $\XX(\mu,\lambda \otimes X)$. 
\end{Definition}

\begin{Proposition} \label{prop:hom_space_tree_basis}
Fix $\lambda \in \Lambda$. Then $\XX(\lambda,X^{\otimes n})$ has dimension the number of length $n$ paths from the tensor unit to $\lambda$ in the fusion graph for $X$. Moreover, an. explicit basis is given by string diagrams of the form
\[
\begin{tikzpicture}[yscale=-1,scale=0.03,baseline={([yshift=-.5ex]current bounding box.center)}]
\begin{scope}[shift={(0.00mm,719.29mm)}]
\draw [fill=black,draw=black] (659.15mm,-50.45mm) circle (4.80mm) ;
\draw [fill=black,draw=black] (743.15mm,45.55mm) circle (4.80mm) ;
\draw [fill=black,draw=black] (832.15mm,156.55mm) circle (4.80mm) ;
\draw [fill=none,draw=black] (270.72mm,-589.75mm)
-- ++(-164.05mm,166.88mm)
;
\draw [fill=none,draw=black] (502.00mm,-591.64mm)
-- ++(-318.00mm,328.00mm)
;
\draw [fill=none,draw=black] (16.66mm,-581.15mm)
-- (846.26mm,1037.66mm)
;
\draw [fill=none,draw=black] (262.24mm,-87.49mm)
-- ++(497.80mm,-492.15mm)
-- ++(0.00mm,0.00mm)
-- ++(0.00mm,0.00mm)
;
\draw [fill=none,draw=black] (341.44mm,62.41mm)
-- ++(692.96mm,-650.54mm)
-- ++(0.00mm,0.00mm)
;
\draw [fill=none,draw=black] (632.76mm,613.96mm)
-- (1749.99mm,-559.84mm)
;
\node [black] at (20.00mm,-395.64mm) { $e_1$ };
\node [black] at (90.00mm,-247.64mm) { $e_2$ };
\node [black] at (160.00mm,-71.64mm) { $e_3$ };
\node [black] at (256.00mm,96.36mm) { $e_4$ };
\node [black] at (500.00mm,628.36mm) { $e_n$ };
\node [black] at (12.00mm,-660mm) { $X$ };
\node [black] at (272.00mm,-660.64mm) { $X$ };
\node [black] at (488.00mm,-660.64mm) { $X$ };
\node [black] at (736.00mm,-660.64mm) { $X$ };
\node [black] at (1024.00mm,-660.64mm) { $X$ };
\node [black] at (1736.00mm,-660.64mm) { $X$ };
\node [black] at (848.00mm,1100mm) { $\lambda$ };
\end{scope}
\end{tikzpicture}
\]
\noindent we call such string diagrams {\bf tree basis vectors}
\end{Proposition}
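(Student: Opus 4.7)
The plan is to proceed by induction on $n$, leveraging semi-simplicity to decompose $X^{\otimes n}$ one tensor factor at a time.

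For the base case $n=0$, we have $X^{\otimes 0} = \mathbf{1}$, so $\XX(\lambda,\mathbf{1})$ has dimension one if $\lambda = \mathbf{1}$ and zero otherwise; this matches the number of length $0$ paths in the fusion graph from $\mathbf{1}$ to $\lambda$, and the empty diagram is the unique basis vector.

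For the inductive step, I would write $X^{\otimes n} = X^{\otimes (n-1)} \otimes X$ and use the semi-simple decomposition
\[
X^{\otimes (n-1)} \;\cong\; \bigoplus_{\mu \in \Lambda} \XX(\mu, X^{\otimes (n-1)}) \otimes \mu,
\]
which, after tensoring with $X$ and applying $\XX(\lambda, -)$, yields a natural isomorphism
\[
\XX(\lambda, X^{\otimes n}) \;\cong\; \bigoplus_{\mu \in \Lambda} \XX(\mu, X^{\otimes (n-1)}) \otimes \XX(\lambda, \mu \otimes X).
\]
By the inductive hypothesis $\dim \XX(\mu, X^{\otimes (n-1)})$ equals the number of length $(n-1)$ paths from $\mathbf{1}$ to $\mu$, and by Definition \ref{def:fusion_graph} the dimension of $\XX(\lambda, \mu \otimes X)$ is the number of edges from $\mu$ to $\lambda$ in the fusion graph. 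Summing over $\mu$ recovers the number of length $n$ paths from $\mathbf{1}$ to $\lambda$.

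The basis claim follows by unpacking the isomorphism above on the level of distinguished basis vectors. Given a length $n$ path $\mathbf{1} = \mu_0 \xrightarrow{e_1} \mu_1 \xrightarrow{e_2} \cdots \xrightarrow{e_n} \mu_n = \lambda$, one composes (stacks) the tree basis vector for the path $\mu_0 \to \cdots \to \mu_{n-1}$ supplied by induction with the distinguished trivalent vertex $e_n \in \XX(\lambda, \mu_{n-1} \otimes X)$; this reproduces exactly the diagram pictured in the statement. The main bookkeeping obstacle is simply to verify that this stacking procedure maps the tensor-product basis on the right hand side of the displayed isomorphism bijectively onto the set of tree diagrams indexed by paths, but once the recursive construction is spelled out this is immediate.
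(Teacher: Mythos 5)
Your argument is correct and is exactly the paper's intended proof: the paper's entire proof is the one-line instruction ``Decompose $X^{\otimes n}$ using the fusion graph for $X$,'' and your induction on $n$ via $X^{\otimes n} = X^{\otimes(n-1)} \otimes X$, semi-simplicity, and the identification of edges with the distinguished basis vectors of $\XX(\lambda,\mu\otimes X)$ is precisely that decomposition spelled out. No substantive difference in approach.
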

\begin{proof}
Decompose $X^{\otimes n}$ using the fusion graph for $X$.
\end{proof}
\begin{Definition} \label{def:matrix_units}
Since $\XX$ is semi-simple, the Artin-Wedderburn theorem implies that $\End(X^{\otimes n})$ is a product of matrix algebras. Proposition \ref{prop:hom_space_tree_basis} implies that in the tree string basis, the matrix units in  $\End(X^{\otimes n})$ look like
\[
\begin{tikzpicture}[yscale=-1,scale=0.03,baseline={([yshift=-.5ex]current bounding box.center)}]
\begin{scope}[shift={(0.00mm,719.29mm)}]
\draw [fill=none,draw=black] (250.00mm,-500.49mm)
-- ++(118.57mm,-100.00mm)
-- ++(0.00mm,0.00mm)
;
\draw [fill=none,draw=black] (377.14mm,-407.64mm)
-- (592.86mm,-593.35mm)
;
\draw [fill=none,draw=black] (743.12mm,-134.98mm)
-- ++(464.67mm,-438.41mm)
;
\draw [fill=none,draw=black] (107.08mm,-608.33mm)
-- ++(798.02mm,593.97mm)
;
\draw [fill=none,draw=black] (274.00mm,777.81mm)
-- ++(118.57mm,100.00mm)
-- ++(0.00mm,0.00mm)
;
\draw [fill=none,draw=black] (401.14mm,684.95mm)
-- ++(215.71mm,185.71mm)
;
\draw [fill=none,draw=black] (767.12mm,412.30mm)
-- ++(464.67mm,438.41mm)
;
\draw [fill=none,draw=black] (131.08mm,885.64mm)
-- (929.10mm,291.67mm)
;
\draw [fill=none,draw=black] (901.43mm,-17.05mm)
.. controls (1141.87mm,158.38mm) and (1031.26mm,204.81mm) .. (929.00mm,291.64mm)
;
\node [black] at (1100mm,150mm) { $\lambda$ };
\node [black] at (145.71mm,-464.78mm) { $e_1$ };
\node [black] at (297.14mm,-353.35mm) { $e_2$ };
\node [black] at (665.71mm,-84.78mm) { $e_n$ };
\node [black] at (208.34mm,700.86mm) { $f_1$ };
\node [black] at (370.78mm,550.34mm) { $f_2$ };
\node [black] at (709.90mm,300.67mm) { $f_n$ };
\draw [fill=black,draw=black] (596.18mm,-401.93mm) ellipse (2.65mm and 2.53mm) ;
\draw [fill=black,draw=black] (634.47mm,-373.36mm) ellipse (2.65mm and 2.53mm) ;
\draw [fill=black,draw=black] (673.76mm,-341.21mm) ellipse (2.65mm and 2.53mm) ;
\draw [fill=black,draw=black] (657.13mm,687.36mm) ellipse (2.65mm and 2.53mm) ;
\draw [fill=black,draw=black] (727.74mm,639.16mm) ellipse (2.65mm and 2.53mm) ;
\draw [fill=black,draw=black] (782.18mm,601.60mm) ellipse (2.65mm and 2.53mm) ;
\end{scope}
\end{tikzpicture}
\]
\noindent Equivalently, the irreducible representations of $\End(X^{\otimes n})$ are parameterized by the simple objects in $\XX$ which have a length $n$ path from the tensor unit in the fusion graph for $X$. The string diagrams defined in proposition \ref{prop:hom_space_tree_basis} form a basis for the corresponding representation.
\end{Definition}

\begin{Definition} \label{def:6j_symbols}
Fix $\lambda_1,\lambda_2,\lambda_3,\mu \in \Lambda$. Then we have two bases for $\XX(\mu,\lambda_1 \otimes \lambda_2 \otimes \lambda_3)$:

\[
\left\{ 
\begin{tikzpicture}[yscale=-1,scale=0.03,baseline={([yshift=-.5ex]current bounding box.center)}]
\begin{scope}[shift={(0.00mm,719.29mm)}]
\draw [fill=none,draw=black] (125.71mm,-467.64mm)
-- (1457.14mm,838.08mm)
;
\draw [fill=none,draw=black] (1048.57mm,435.22mm)
-- ++(462.86mm,-905.71mm)
-- ++(0.00mm,0.00mm)
;
\draw [fill=none,draw=black] (625.71mm,20.93mm)
-- ++(308.57mm,-500.00mm)
-- ++(0.00mm,0.00mm)
-- ++(0.00mm,0.00mm)
;
\node [black] at (84.85mm,-550mm) { $\lambda_1$ };
\node [black] at (884.89mm,-550mm) { $\lambda_2$ };
\node [black] at (1478.86mm,-550mm) { $\lambda_3$ };
\node [black] at (880mm,155.35mm) { $\alpha$ };
\node [black] at (549.52mm,70.49mm) { $e_1$ };
\node [black] at (900.07mm,454.35mm) { $e_2$ };
\node [black] at (1531.39mm,870.53mm) { $\mu$ };
\end{scope}
\end{tikzpicture} 
\right\} 
\quad \longleftrightarrow \quad 
\left\{ 
\begin{tikzpicture}[yscale=-1,xscale=-1,scale=0.03,baseline={([yshift=-.5ex]current bounding box.center)}]
\begin{scope}[shift={(0.00mm,719.29mm)}]
\draw [fill=none,draw=black] (125.71mm,-467.64mm)
-- (1457.14mm,838.08mm)
;
\draw [fill=none,draw=black] (1048.57mm,435.22mm)
-- ++(462.86mm,-905.71mm)
-- ++(0.00mm,0.00mm)
;
\draw [fill=none,draw=black] (625.71mm,20.93mm)
-- ++(308.57mm,-500.00mm)
-- ++(0.00mm,0.00mm)
-- ++(0.00mm,0.00mm)
;
\node [black] at (84.85mm,-550mm) { $\lambda_1$ };
\node [black] at (884.89mm,-550mm) { $\lambda_2$ };
\node [black] at (1478.86mm,-550mm) { $\lambda_3$ };
\node [black] at (880mm,155.35mm) { $\beta$ };
\node [black] at (549.52mm,70.49mm) { $f_1$ };
\node [black] at (900.07mm,454.35mm) { $f_2$ };
\node [black] at (1531.39mm,870.53mm) { $\mu$ };
\end{scope}
\end{tikzpicture}
\right\}
\]

\noindent The $6j$ symbols are the entries in the change of basis matrix $(j_{\mu}^{\lambda_1,\lambda_2,\lambda_3})^{e_1,e_2}_{f_1,f_2}$. In other words, they are a coordinate representation of the associator. They must satisfy some algebraic relations which correspond to the pentagon axiom and the unit axiom. From the $6j$-symbols and the Grothendieck ring, you can recover the tensor category. Therefore, the $6j$-symbols are coordinates on the moduli stack of semi-simple tensor categories with a fixed Grothendieck ring.
\end{Definition}

\section{Young Semi-normal form}

\begin{Definition} \label{sec:young_seminormal_form}
Let $\XX$ be a semi-simple tensor category with distinguished object $X$. If $\sigma \in \End(X^{\otimes 2})$ then we have
\[
\begin{tikzpicture}[yscale=-1,scale=0.02,baseline={([yshift=-.5ex]current bounding box.center)}]
\begin{scope}[shift={(0.00mm,719.29mm)}]
\draw [fill=none,draw=black] (937.23mm,-396.39mm)
-- ++(0.00mm,170.09mm)
-- ++(553.95mm,0.00mm)
-- ++(0.00mm,-168.70mm)
-- cycle
;
\draw [fill=none,draw=black] (88.89mm,-713.38mm)
-- (1349.56mm,1052.36mm)
;
\draw [fill=none,draw=black] (1054.60mm,-224.47mm)
-- (711.15mm,158.14mm)
;
\draw [fill=none,draw=black] (1390.00mm,-226.21mm)
-- (857.14mm,365.22mm)
;
\draw [fill=none,draw=black] (1057.14mm,-397.64mm)
-- ++(-1.43mm,-322.86mm)
;
\draw [fill=none,draw=black] (1401.43mm,-396.21mm)
-- ++(4.29mm,-322.86mm)
-- ++(0.00mm,1.43mm)
;
\node [black] at (100mm,-850mm) { $\lambda$ };
\node [black] at (450mm,100mm) { $a$ };
\node [black] at (800mm,600mm) { $b$ };
\node [black] at (1315.22mm,1200mm) { $\mu$ };
\node [black] at (1200.43mm,-307.30mm) { $\sigma$ };
\end{scope}
\end{tikzpicture} \quad = \sum_{f,g} m_{fg,ab}(\sigma)
\begin{tikzpicture}[yscale=-1,scale=0.02,baseline={([yshift=-.5ex]current bounding box.center)}]
\begin{scope}[shift={(0.00mm,719.29mm)}]
\draw [fill=none,draw=black] (88.89mm,-713.38mm)
-- (1349.56mm,1052.36mm)
;
\draw [fill=none,draw=black] (1315.22mm,-717.42mm)
-- (711.15mm,158.14mm)
;
\draw [fill=none,draw=black] (1638.50mm,-717.14mm)
-- (857.14mm,365.22mm)
;

\node [black] at (100mm,-850mm) { $\lambda$ };
\node [black] at (450mm,100mm) { $f$ };
\node [black] at (800mm,600mm) { $g$ };
\node [black] at (1315.22mm,1200mm) { $\mu$ };
\end{scope}
\end{tikzpicture}
\]
We call the matrix $m(\sigma)$ a {\bf semi-normal form} for $\sigma$.
\end{Definition}

\begin{Definition} \label{def:symmetric_group_tensor_cat}
We define the category $S$ which has objects the natural numbers and morphisms
\[
S(m,n) = 
\begin{cases}
S_m & m = n \\
0 & {\rm otherwise}.
\end{cases}
\]
where $S_n$ is the symmetric group with simple reflections $g_1,\dots,g_{n-1}$. The inclusion $S_m \otimes S_n \to S_{m+n}$ defined by $g_i \otimes g_j \mapsto g_i g_{m+j}$ equips $S$ with a tensor structure. We define $\mathcal{S} \subseteq [S^{\rm op},\Vect]$ to be the idempotent completion of $S$. The monoidal structure on $S$ extends to $\mathcal{S}$ via day convolution. The category $\mathcal{S}$ can be described as the polynomial representations of $\GL(\infty)$ as defined by Sam and Snowden in \cite{MR3430359}. The Grothendieck ring for $\mathcal{S}$ has basis given by partitions and multiplication given by the Littlewood-Richardson rule. A special case of the Littlewood-Richardson rule is the Pieri rule:
\[
\lambda \otimes \ydiagram{1} = \sum_{\lambda \subset \mu \vdash n+1} \mu
\]
This implies that the tree basis vectors
\[
\begin{tikzpicture}[yscale=-1,scale=0.03,baseline={([yshift=-.5ex]current bounding box.center)}]
\begin{scope}[shift={(0.00mm,719.29mm)}]
\draw [fill=black,draw=black] (659.15mm,-50.45mm) circle (4.80mm) ;
\draw [fill=black,draw=black] (743.15mm,45.55mm) circle (4.80mm) ;
\draw [fill=black,draw=black] (832.15mm,156.55mm) circle (4.80mm) ;
\draw [fill=none,draw=black] (270.72mm,-589.75mm)
-- ++(-164.05mm,166.88mm)
;
\draw [fill=none,draw=black] (502.00mm,-591.64mm)
-- ++(-318.00mm,328.00mm)
;
\draw [fill=none,draw=black] (16.66mm,-581.15mm)
-- (846.26mm,1037.66mm)
;
\draw [fill=none,draw=black] (262.24mm,-87.49mm)
-- ++(497.80mm,-492.15mm)
-- ++(0.00mm,0.00mm)
-- ++(0.00mm,0.00mm)
;
\draw [fill=none,draw=black] (341.44mm,62.41mm)
-- ++(692.96mm,-650.54mm)
-- ++(0.00mm,0.00mm)
;
\draw [fill=none,draw=black] (632.76mm,613.96mm)
-- (1749.99mm,-559.84mm)
;
\node [black] at (-50mm,-395.64mm) { $e_{m+1}$ };
\node [black] at (0.00mm,-247.64mm) { $e_{m+2}$ };
\node [black] at (50.00mm,-71.64mm) { $e_{m+3}$ };
\node [black] at (140.00mm,96.36mm) { $e_{m+4}$ };
\node [black] at (500.00mm,628.36mm) { $e_n$ };
\node [black] at (12.00mm,-660mm) { $\lambda$ };
\node [black] at (272.00mm,-660.64mm) { $\ydiagram{1}$ };
\node [black] at (488.00mm,-660.64mm) { $\ydiagram{1}$ };
\node [black] at (736.00mm,-660.64mm) { $\ydiagram{1}$ };
\node [black] at (1024.00mm,-660.64mm) { $\ydiagram{1}$ };
\node [black] at (1736.00mm,-660.64mm) { $\ydiagram{1}$ };
\node [black] at (848.00mm,1100mm) { $\mu$ };
\end{scope}
\end{tikzpicture}
\]
are in bijection (upto scaling) with standard skew tableaux of shape $\mu \backslash \lambda$. We shall abuse notation and identify these tree basis vectors with the corresponding standard skew tableaux. Suppose that $\lambda \subseteq \mu \vdash n+2$ are partitions such that $\mu \backslash \lambda$ is not contained in a single row or column. Then there are exactly two partitions which satisfy $\lambda \subseteq \nu \subseteq \mu$. Call them $\nu$ and $\nu'$. The multiplicity space $\mathcal{S}(\lambda \otimes \ydiagram{1} \otimes \ydiagram{1},\mu)$ is 2-dimensional with basis
\[
\begin{tikzpicture}[yscale=-1,scale=0.015,baseline={([yshift=-.5ex]current bounding box.center)}]
\begin{scope}[shift={(0.00mm,719.29mm)}]
\draw [fill=none,draw=black] (125.71mm,-716.21mm)
-- (1522.86mm,1058.08mm)
;
\draw [fill=none,draw=black] (880.00mm,-716.21mm)
-- ++(-402.86mm,445.71mm)
-- ++(0.00mm,0.00mm)
;
\draw [fill=none,draw=black] (891.43mm,258.08mm)
-- (1708.57mm,-713.35mm)
;
\node [black] at (200mm,-461.92mm) { $\lambda$ };
\node [black] at (450mm,100mm) { $\nu$ };
\node [black] at (900mm,700mm) { $\mu$ };
\end{scope}
\end{tikzpicture} \; , \;
\begin{tikzpicture}[yscale=-1,scale=0.015,baseline={([yshift=-.5ex]current bounding box.center)}]
\begin{scope}[shift={(0.00mm,719.29mm)}]
\draw [fill=none,draw=black] (125.71mm,-716.21mm)
-- (1522.86mm,1058.08mm)
;
\draw [fill=none,draw=black] (880.00mm,-716.21mm)
-- ++(-402.86mm,445.71mm)
-- ++(0.00mm,0.00mm)
;
\draw [fill=none,draw=black] (891.43mm,258.08mm)
-- (1708.57mm,-713.35mm)
;
\node [black] at (200mm,-461.92mm) { $\lambda$ };
\node [black] at (450mm,100mm) { $\nu'$};
\node [black] at (1000mm,700mm) { $\mu$ };
\end{scope}
\end{tikzpicture}
\]
The semi-normal form for $g_1$ is well known:
\[
m(g_1) = 
\begin{pmatrix} -1/k & 1 \\ 1 - 1/k^2 & 1/k \end{pmatrix}
\]
where $k$ is the axial distance in $\mu \backslash \lambda$:
\[
\begin{tikzpicture}[yscale=-1,scale=0.03,baseline={([yshift=-.5ex]current bounding box.center)}]
\begin{scope}[shift={(0.00mm,719.29mm)}]
\draw [fill=none,draw=black,dashed] (162.86mm,-399.07mm)
-- ++(0.00mm,1145.71mm)
-- ++(302.86mm,0.00mm)
-- ++(0.00mm,-334.29mm)
-- ++(360.00mm,0.00mm)
-- ++(0.00mm,-348.57mm)
-- ++(382.86mm,0.00mm)
-- ++(5.71mm,-454.29mm)
;
\draw [fill=none,draw=black,dashed] (162.86mm,-396.21mm)
-- ++(1051.43mm,0.00mm)
-- ++(0.00mm,6.34mm)
;
\draw [fill=none,draw=black] (824.29mm,63.08mm)
-- (940.00mm,63.79mm)
-- ++(0.00mm,110.71mm)
-- ++(-115.00mm,0.00mm)
-- cycle
;
\draw [fill=none,draw=black] (462.65mm,412.45mm)
-- ++(115.71mm,0.71mm)
-- ++(0.00mm,110.71mm)
-- ++(-115.00mm,0.00mm)
-- cycle
;
\draw [fill=none,draw=black] (518.90mm,412.44mm)
-- (517.71mm,119.21mm)
-- (825.48mm,118.91mm)
;
\end{scope}
\end{tikzpicture}
\]
\end{Definition}
Identify the trivalent vertex $e$ with the matrix unit
\[
\begin{tikzpicture}[yscale=-1,scale=0.02,baseline={([yshift=-.5ex]current bounding box.center)}]
\begin{scope}[shift={(0.00mm,719.29mm)}]
\draw [fill=none,draw=black] (277.14mm,-716.21mm)
-- (740.00mm,-59.07mm)
-- (1277.14mm,-716.21mm)
;
\draw [fill=none,draw=black] (740.00mm,-58.71mm)
-- ++(0.00mm,99.64mm)
-- ++(0.00mm,0.00mm)
;
\draw [fill=none,draw=black] (277.14mm,1061.12mm)
-- (740.00mm,403.97mm)
-- ++(537.14mm,657.14mm)
;
\draw [fill=none,draw=black] (740.00mm,403.62mm)
-- (740.00mm,37.29mm)
;
\node [black] at (802.86mm,-27.64mm) { $e$ };
\node [black] at (831.43mm,398.08mm) { $e$ };
\node [black] at (322.86mm,-900mm) { $X$ };
\node [black] at (1220.00mm,-900mm) { $X$ };
\node [black] at (322.86mm,1200mm) { $X$ };
\node [black] at (1228.57mm,1200mm) { $X$ };
\end{scope}
\end{tikzpicture}
\]
inside $\End(X^{\otimes 2})$. Consider the action of $e$ on the two bases
\[
\begin{tikzpicture}[yscale=-1,scale=0.03,baseline={([yshift=-.5ex]current bounding box.center)}]
\begin{scope}[shift={(0.00mm,719.29mm)}]
\draw [fill=none,draw=black] (125.71mm,-467.64mm)
-- (1457.14mm,838.08mm)
;
\draw [fill=none,draw=black] (1048.57mm,435.22mm)
-- ++(462.86mm,-905.71mm)
-- ++(0.00mm,0.00mm)
;
\draw [fill=none,draw=black] (625.71mm,20.93mm)
-- ++(308.57mm,-500.00mm)
-- ++(0.00mm,0.00mm)
-- ++(0.00mm,0.00mm)
;
\node [black] at (84.85mm,-550mm) { $\lambda_1$ };
\node [black] at (884.89mm,-550mm) { $X$ };
\node [black] at (1478.86mm,-550mm) { $X$ };
\node [black] at (880mm,155.35mm) { $\alpha$ };
\node [black] at (549.52mm,70.49mm) { $e_1$ };
\node [black] at (900.07mm,454.35mm) { $e_2$ };
\node [black] at (1531.39mm,870.53mm) { $\mu$ };
\end{scope}
\end{tikzpicture} \hspace{2cm}
\begin{tikzpicture}[yscale=-1,xscale=-1,scale=0.03,baseline={([yshift=-.5ex]current bounding box.center)}]
\begin{scope}[shift={(0.00mm,719.29mm)}]
\draw [fill=none,draw=black] (125.71mm,-467.64mm)
-- (1457.14mm,838.08mm)
;
\draw [fill=none,draw=black] (1048.57mm,435.22mm)
-- ++(462.86mm,-905.71mm)
-- ++(0.00mm,0.00mm)
;
\draw [fill=none,draw=black] (625.71mm,20.93mm)
-- ++(308.57mm,-500.00mm)
-- ++(0.00mm,0.00mm)
-- ++(0.00mm,0.00mm)
;
\node [black] at (84.85mm,-550mm) { $X$ };
\node [black] at (884.89mm,-550mm) { $X$ };
\node [black] at (1478.86mm,-550mm) { $\lambda_3$ };
\node [black] at (880mm,155.35mm) { $\beta$ };
\node [black] at (549.52mm,70.49mm) { $f_1$ };
\node [black] at (900.07mm,454.35mm) { $f_2$ };
\node [black] at (1531.39mm,870.53mm) { $\mu$ };
\end{scope}
\end{tikzpicture}
\]
On the left basis, $e$ acts via the semi-normal form $m(e)$. On the right basis, $e$ is a projection. Inside $\mathcal{S}$, this implies the matrix of eigenvectors vectors of $m(\ydiagram{2})= 1/2(e + m(g_1))$ equals $(j_{\mu}^{\lambda,\ydiagram{1},\ydiagram{1}})^{-1}$. Therefore we have
\[
(j_{\mu}^{\lambda,\ydiagram{1},\ydiagram{1}})^{-1} = 
\begin{pmatrix}
\frac{k}{k+1} & \frac{-k}{k-1} \\
1 & 1
\end{pmatrix}.
\]
This proves theorem \ref{thm:main_theorem}.
\bibliographystyle{alpha}
\bibliography{bibliography}

\begin{thebibliography}{EGNO15}

\bibitem[CFS95]{MR1366832}
J.~Scott Carter, Daniel~E. Flath, and Masahico Saito.
\newblock {\em The classical and quantum 6{$j$}-symbols}, volume~43 of {\em
  Mathematical Notes}.
\newblock Princeton University Press, Princeton, NJ, 1995.

\bibitem[EGNO15]{MR3242743}
Pavel Etingof, Shlomo Gelaki, Dmitri Nikshych, and Victor Ostrik.
\newblock {\em Tensor categories}, volume 205 of {\em Mathematical Surveys and
  Monographs}.
\newblock American Mathematical Society, Providence, RI, 2015.

\bibitem[Sel11]{MR2767048}
P.~Selinger.
\newblock A survey of graphical languages for monoidal categories.
\newblock In {\em New structures for physics}, volume 813 of {\em Lecture Notes
  in Phys.}, pages 289--355. Springer, Heidelberg, 2011.

\bibitem[SS12]{SS2012_2}
Steven~V Sam and Andrew Snowden.
\newblock {Introduction to twisted commutative algebras}.
\newblock 2012.

\bibitem[SS15]{MR3376738}
Steven~V. Sam and Andrew Snowden.
\newblock Stability patterns in representation theory.
\newblock {\em Forum Math. Sigma}, 3:e11, 108, 2015.

\bibitem[SS16]{MR3430359}
Steven~V. Sam and Andrew Snowden.
\newblock G{L}-equivariant modules over polynomial rings in infinitely many
  variables.
\newblock {\em Trans. Amer. Math. Soc.}, 368(2):1097--1158, 2016.

\end{thebibliography}

\end{document}